\documentclass[reqno,10pt]{article}
\usepackage{amsmath}%
\usepackage{amsthm}
\usepackage{amsfonts}%
\usepackage{amssymb}%
\usepackage{graphicx}

\usepackage[comma,authoryear]{natbib}





\usepackage{tikz,ifthen}
\usetikzlibrary{positioning}
\usetikzlibrary{calc,through,intersections,shapes.geometric}
\usepackage{ifthen}
\usepackage{geometry}
\usepackage{pgfplots} 
\pdfminorversion 3

%
\newtheorem{theorem}{Theorem}[]

\newtheorem{corollary}[theorem]{Corollary}

\newtheorem{lemma}[theorem]{Lemma}

\newtheorem{proposition}[theorem]{Proposition}


\theoremstyle{definition}

\newtheorem*{example}{Example}

\newcommand{\comments}[1]{}
\newcommand{\G}{{\mathcal G}}

\newcommand{\diag}{\mathop{\mathrm{diag}}\nolimits}


\begin{document}
\title{An upper bound on  the algebraic connectivity of regular graphs}

\author{Sera Aylin Cakiroglu\footnote{Current address: CRUK London Research Institute, 44 Lincoln's Inn Fields, WC2A 3LY London, UK}\\ School of Mathematical Sciences, Queen Mary University of London\\ Mile End Road London E1 4NS, UK\\ Email: s.cakiroglu@qmul.ac.uk\\ }

\maketitle

\section{Introduction}
The Laplacian matrix of a graph can be used to express different connectivity  measures of a graph. In the simplest way, the multiplicity of the   eigenvalue  to the all-$1$ eigenvector equals the number of connected components of the graph. Or,   most famously, it can be used to compute the number of spanning trees of a graph (or its connected components). The  eigenvalues of the Laplacian matrix play an important role in different areas of mathematics and, of course, also in graph theory. 
 The second smallest  eigenvalue is also known as the algebraic connectivity of the graph. This definition is due to \cite{Fiedler} who proved as one of the first results  that the vertex connectivity is an upper bound for the eigenvalue. The name  has its origin in the correspondence between the eigenvalue and the connectivity of the graph, that is it is equal to zero if and only if the graph is not connected.
There has been wide interest on the bounds on this eigenvalue (see for example \cite{Kirkland1,Kirkland2,Lu,Ghosh}),  whose importance is  `$[\dots]$ difficult to overemphasize', since the larger the algebraic connectivity of a graph $\G$, `$[\dots]$ the more difficult it is to cut $\G$ into pieces, and the more $\G$ expands' as \cite{Bollobas} writes.  

This gives the motivation for this paper where we derive a new upper bound for the algebraic connectivity of a regular graph  using the Higman-Sims technique introduced by \cite{Haemers}. We show that this bound is tight, by giving an example of a regular graph meeting this bound.
Together with a result on the connectivity of the neighbourhood graph of strongly regular graphs our result  gives  a characterization of a class of strongly regular graphs that maximize the algebraic connectivity among regular graphs which includes the complete multipartite graph.

\section{Preliminaries}

Let $\G$ be a graph given by a set of vertices $V(\G)$ and a set  $E(\G)$ of edges, i.e. two-element subsets of $V(\G)$.  We do not allow any loops or  multiple edges and all graphs we consider are assumed to be connected. 
The number of edges incident with a vertex $u\in V(\G)$ is called the \emph{degree} of $u$ and denoted by $\delta_u$.  If  the degrees of all vertices of $\G$ are all equal to a constant $\delta$, then  $\G$ is called $\delta$-regular. 
The adjacency matrix of a graph $\G$, that is the symmetric $|V(\G)|\times|V(\G)|$ matrix whose $ij$-entry is $1$ if  vertices $i$ and $j$ are joined by and edge and $0$ else, is denoted by $A(\G)$. We will denote the eigenvalues of the adjacency matrix by $\nu_1(\G)\geq\nu_2(\G)\ldots\geq\nu_v(\G)$.
Note that the row and column of $A(\G)$ corresponding to a vertex $u$ sum both to $\delta_u$ and if the graph is $\delta$-regular, then $\nu_1(\G)=\delta$.
The \emph{Laplacian matrix} of a graph with $v$ vertices is defined as
\[\Lambda(\G)=\diag(\delta_1,\ldots,\delta_v) {\mathbb I}_v-A(\G),\]
where ${\mathbb I}_v$ denotes the $v \times v$ identity matrix.

The all-one vector is always an eigenvector of $\Lambda(\G)$ with eigenvalue $0$ and its multiplicity is the number of connected components of $\G$. Due to this we call all other Laplacian eigenvalues \emph{non-trivial}. All non-trivial Laplacian eigenvalues are positive and strictly positive if $\G$ is connected.
The smallest non-trivial eigenvalue is also called the \emph{algebraic connectivity} of the graph.   Note that if $\G$ connected, the multiplicity of the eigenvalue $0$ is $1$ and therefore the algebraic connectivity is strictly positive.

For a vertex $u$ of $\G$, the \emph{neighbourhood graph} of $u$ is the graph  ${\mathcal G}_u$  with  vertex set 
\[ V(\G_u)=\{w\in V(\G)\setminus\{u\}|\{u,w\}\in E(\G)\}\] and edge set
\[E(\G_u)=\{f\in E(\G)|f=\{w_1,w_2\}, w_1,w_2\in V(\G_u)\}.\]

A \emph{strongly regular graph} with parameters $(\delta,\lambda,\mu)$ is a connected  $\delta$-regular graph such that each pair of adjacent vertices has exactly $\lambda$ common neighbours and  each pair of non-adjacent vertices has exactly $\mu\geq 1$ common neighbours.
The parameters $\lambda$ and $\mu$ of a strongly regular graph $\G$ on $v$ vertices and degree $\delta$ satisfy 
\begin{equation}\label{lemma parameters of SRG satisfy}
\delta(\delta-\lambda-1)=(v-\delta-1)\mu.
\end{equation}
The  eigenvalues of the adjacency matrix are $\delta$ with  multiplicity $1$ and 
\[
\nu_{1,2}(\G)=\frac{1}{2}\left[(\lambda-\mu)\pm\sqrt{(\lambda-\mu)^2+4(\delta-\mu)}\right]
\] 
with  multiplicities
\[
m_{1,2}=\frac{1}{2}\left[(v-1)\mp\frac{2\delta+(v-1)(\lambda-\mu)}{\sqrt{(\lambda-\mu)^2+4(\delta-\mu)}}\right].
\]

We will use the technique of using partitions of the graph's adjacency matrix to study subgraphs as introduced in \cite{Haemers}. At the heart of this lies the notion of interlacing eigenvalues.
Suppose $L$ is an  $m\times m$ matrix with $m\geq n$, having  eigenvalues $\nu_1(L)\geq \cdots\geq\nu_m(L)$. If
\[
\nu_i(L)\geq \nu_i(M)\geq \nu_{m-n+i}(L)
\]for all $i\in\{1,\ldots,n\}$, then we say that the eigenvalues of $M$ \emph{interlace} the eigenvalues of $L$. If there exists an integer $j\in\{0,\ldots, n\}$ such that
\[
\nu_i(L)=\nu_i(M)\text{ for }i=1,\ldots,j
\]and
\[
\nu_{m-n+i}(L)=\nu_i(M)\text{ for }i=j+1,\ldots,n,
\]then the interlacing is called \emph{tight}.

\begin{theorem}[\cite{Haemers}]\label{Corollary Haemers}
 Let $L$ be a symmetric real square matrix partitioned as follows
\[
L=\left(
\begin{array}[c]{ccc}
	L_{11}&\cdots& L_{1m}\\
	\vdots&      &\vdots\\
        L_{m1}&\cdots&L_{mm}
\end{array}
\right)
\]such that $L_{ii}$ is square for $i=1,\ldots,m$. Let $M$ be the $m\times m$ matrix whose $ij$-entry is the average row sum of $L_{ij}$ for $i,j=1,\ldots,m$.
\begin{enumerate}
 \item The eigenvalues of $M$ interlace the eigenvalues of $L$.
\item If the interlacing is tight, then $L_{ij}$ has constant row and column sums for $i,j=1,\ldots,m$.
\item If for all  $i,j\in\{1,\ldots,m\}$ the matrix $L_{ij}$ has constant row and column sums, then any eigenvalue of $M$ is also an eigenvalue of $L$ with not smaller a multiplicity.
\end{enumerate}
\end{theorem}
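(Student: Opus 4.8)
The plan is to reduce all three parts to a single algebraic identity relating $L$ and $M$ through the \emph{characteristic matrix} of the partition. Write $L$ as an $n\times n$ matrix and let the partition of the index set $\{1,\dots,n\}$ into $m$ parts have part sizes $n_1,\dots,n_m$. Let $S$ be the $n\times m$ matrix whose $i$-th column is the indicator vector of the $i$-th part. Then $S^\top S=\diag(n_1,\dots,n_m)$ is invertible, and a direct computation shows that the $ij$-entry of $(S^\top S)^{-1}S^\top L S$ is exactly the average row sum of the block $L_{ij}$; hence $M=(S^\top S)^{-1}S^\top L S$. To symmetrise, I would pass to the normalised matrix $\tilde S=S(S^\top S)^{-1/2}$, which has orthonormal columns ($\tilde S^\top \tilde S=I_m$), and set $B=\tilde S^\top L\tilde S$. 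Since $B=(S^\top S)^{1/2}M(S^\top S)^{-1/2}$, the matrices $B$ and $M$ are similar and hence share the same spectrum with multiplicities, while $B$ is symmetric because $L$ is.

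For part (1), I would extend $\tilde S$ to an $n\times n$ orthogonal matrix $Q=[\tilde S \mid \tilde S']$. Then $Q^\top L Q$ is symmetric, has the same eigenvalues as $L$, and has $B$ as its leading $m\times m$ principal submatrix. The Cauchy interlacing theorem for principal submatrices of a symmetric matrix then gives $\nu_i(L)\ge\nu_i(B)\ge\nu_{n-m+i}(L)$ for $i=1,\dots,m$, which, since $\Spec(B)=\Spec(M)$, is precisely the assertion that the eigenvalues of $M$ interlace those of $L$.

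For part (3), suppose each block $L_{ij}$ has constant row sums. Evaluating $L$ on an indicator column $s_j$ shows that, for an index $u$ in part $i$, the entry $(Ls_j)_u$ equals the constant row sum of $L_{ij}$, namely $M_{ij}$; hence $Ls_j=\sum_i M_{ij}s_i$, i.e.\ $LS=SM$. Consequently, if $Mv=\theta v$ with $v\neq 0$, then $L(Sv)=SMv=\theta(Sv)$ and $Sv\neq 0$ because $S$ has full column rank. Thus every eigenvalue of $M$ is an eigenvalue of $L$, and since $S$ is injective it carries a basis of the $\theta$-eigenspace of $M$ to independent $\theta$-eigenvectors of $L$, so the multiplicity does not drop.

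The delicate part is (2), and this is where I expect the real work. I would show that tight interlacing forces $L\tilde S=\tilde S B$. The key sub-lemma is the classical fact that under tight interlacing each eigenvector of the compressed matrix $B=\tilde S^\top L\tilde S$ lifts to an eigenvector of $L$ with the same eigenvalue. One argues this through the Courant--Fischer characterisation: for the saturated top eigenvalues ($i\le j$) the vectors $\tilde S v_i$ attain the extremal Rayleigh quotients of $L$ and must therefore lie in the corresponding eigenspaces, and dually for the saturated bottom eigenvalues ($i>j$); running over an orthonormal eigenbasis $v_i$ of $B$ yields $L\tilde S v_i=\tilde S B v_i$ for every $i$, hence $L\tilde S=\tilde S B$. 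Translating back through $\tilde S=S(S^\top S)^{-1/2}$ gives $LS=SM$, which as in part (3) says every block $L_{ij}$ has constant row sums equal to $M_{ij}$. Finally, transposing $LS=SM$ and using $L=L^\top$ gives $S^\top L=M^\top S^\top$; reading off the $w$-th coordinate for $w$ in part $j$ shows that the columns of $L_{ij}$ also sum to the constant $M_{ji}$, so the blocks have constant column sums as well. The main obstacle is making the lift from $B$-eigenvectors to $L$-eigenvectors rigorous for the intermediate (non-extreme) eigenvalues, which is exactly what the tightness hypothesis is designed to supply.
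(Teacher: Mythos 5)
The paper does not actually prove this statement: Theorem~\ref{Corollary Haemers} is imported verbatim from \cite{Haemers} as a known tool, so there is no in-paper proof to compare against. Your reconstruction is correct, and it is essentially Haemers' original argument: represent the quotient matrix as $M=(S^\top S)^{-1}S^\top LS$, pass to the symmetric compression $B=\tilde S^\top L\tilde S$ with $\tilde S=S(S^\top S)^{-1/2}$, get part (1) from Cauchy interlacing of the principal submatrix $B$ of $Q^\top LQ$, get part (3) from the identity $LS=SM$ together with injectivity of $S$, and get part (2) by lifting eigenvectors of $B$ to eigenvectors of $L$ under tightness. The one step you flag as an obstacle is closed by exactly the induction you sketch: assuming $\tilde S v_1,\dots,\tilde S v_{i-1}$ are already eigenvectors of $L$ for $\nu_1(L),\dots,\nu_{i-1}(L)$, the vector $\tilde S v_i$ lies in their orthogonal complement (as $\tilde S$ preserves inner products) and attains there the extremal Rayleigh quotient $v_i^\top Bv_i=\nu_i(B)=\nu_i(L)$, hence is an eigenvector of $L$; the same argument run upward from the bottom handles the indices $i>j$, giving $L\tilde S=\tilde S B$, i.e.\ $LS=SM$, and transposing yields the constant column sums. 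So the proposal is complete modulo writing out that induction, and it coincides with the proof in the cited source rather than diverging from anything in this paper.
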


\section{A new bound on the algebraic connectivity of  regular graphs}

We will need the following lemma.

\begin{lemma}\label{Lemma Root}
For $x\in{\mathbb R}_{\geq 0}$, $1<\delta<v-1$ and $v\geq 3$
\[
(x(v-1)-\delta(\delta-1))^2+4(v-\delta-1)\delta(v-2\delta+x)> 0.
\]
\end{lemma}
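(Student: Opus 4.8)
The plan is to regard the left-hand side as a quadratic polynomial in the variable $x$,
\[
f(x)=(x(v-1)-\delta(\delta-1))^2+4(v-\delta-1)\delta(v-2\delta+x),
\]
and to prove the stronger statement that $f(x)>0$ for \emph{all} real $x$, which in particular covers the case $x\geq 0$ required by the lemma (so the hypothesis $x\in{\mathbb R}_{\geq 0}$ is not actually needed). Expanding the square, $f$ has the form $f(x)=ax^2+bx+c$ with leading coefficient $a=(v-1)^2$. Since $v\geq 3$ we have $a>0$, so $f$ is an upward-opening parabola; it is therefore strictly positive everywhere precisely when its discriminant $D=b^2-4ac$ is negative. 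The whole proof thus reduces to the single inequality $D<0$.

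First I would record the coefficients obtained from the expansion, namely
\[
a=(v-1)^2,\qquad b=-2(v-1)\delta(\delta-1)+4(v-\delta-1)\delta,\qquad c=\delta^2(\delta-1)^2+4(v-\delta-1)\delta(v-2\delta).
\]
The crux of the argument, and the step I expect to be the main obstacle, is the algebraic simplification of $D=b^2-4ac$. This is a polynomial of degree four in $v$ (and likewise in $\delta$), so the bare expansion is lengthy and error-prone; the point that makes the lemma work is that almost everything cancels and $D$ collapses to the clean factored form
\[
D=-16\,\delta\,v\,(v-\delta-1)^3.
\]
I would carry this out by pulling out the common factor $4\delta$ and then collecting the remaining polynomial by powers of $v$; at that stage the remaining factor is exactly $-4v\,(v-(\delta+1))^3$, and recognising the perfect cube $(v-(\delta+1))^3$ is what turns the messy expansion into the factorisation above.

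With the factorisation in hand the conclusion is immediate from the hypotheses. The constraint $1<\delta<v-1$ gives $\delta>0$ and $v-\delta-1>0$, hence $(v-\delta-1)^3>0$, while $v\geq 3>0$; therefore $D=-16\,\delta\,v\,(v-\delta-1)^3<0$. Combining $a>0$ with $D<0$, the completed-square form $f(x)=a\bigl(x+\tfrac{b}{2a}\bigr)^2-\tfrac{D}{4a}$ is a sum of a nonnegative term and a strictly positive constant, so $f(x)>0$ for every real $x$, which proves the lemma. As a sanity check one may verify the factorisation of $D$ on a small instance such as $v=5,\ \delta=3$, where it gives $D=-240<0$.
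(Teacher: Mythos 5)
Your proof is correct, and the crux computation checks out: with your coefficients, $D=b^2-4ac$ does collapse to $-16\,\delta v(v-\delta-1)^3$; after pulling out $4\delta$ and collecting by powers of $v$, the remaining factor is $-4v\left[v^3-3(\delta+1)v^2+3(\delta+1)^2v-(\delta+1)^3\right]=-4v(v-\delta-1)^3$, exactly as you predicted, and your sanity check at $v=5$, $\delta=3$ ($D=-240$) agrees. The paper organizes what is at bottom the same computation differently: it splits into two cases, dismissing $x\geq 0$, $2\delta\leq v$ outright (both summands are then nonnegative), and only for $v<2\delta$ treats $f$ as a quadratic, locating the vertex $x_0=\delta\bigl(\delta(v+1)-3(v-1)\bigr)/(v-1)^2$ via the derivative, checking $x_0>0$, and evaluating the minimum $f(x_0)=4v\delta(v-\delta-1)^3/(v-1)^2>0$. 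Since the global minimum of an upward parabola equals $-D/(4a)$, the paper's formula for $f(x_0)$ and your factorization of $D$ are the same algebraic identity in different clothing. What your route buys: uniformity (no case distinction), no calculus, and the strictly stronger conclusion that $f>0$ on all of ${\mathbb R}$, which confirms your observation that the hypothesis $x\geq 0$ is superfluous; it also exposes that the paper's verification $x_0>0$ is not actually needed, since positivity of the global minimum over ${\mathbb R}$ already bounds $f$ on any subdomain. What the paper's organization buys is only that the easy half ($v\geq 2\delta$) is dispatched with no algebra at all, at the price of carrying the heavy expansion inside a case analysis.
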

\begin{proof}
For $x\geq 0$ and $2\delta\leq v$ the statement is of course true.

Now, let $v<2\delta$ and
\[
f(x)=(x(v-1)-\delta(\delta-1))^2+4(v-\delta-1)\delta(v-2\delta+x).
\]Then the derivative $f'(x)$
has   root
\[
x_0=\delta\left(\frac{\delta(v+1)-3(v-1)}{(v-1)^2}\right).
\]For $v<2\delta$  and $v\geq 3$ we have
\begin{eqnarray*}
\delta(v+1)-3(v-1)&>&\tfrac{v}{2}(v+1)-3(v-1)=\frac{(v-3)^2+v-3}{2}\geq 0.\\
\end{eqnarray*}
Therefore,  $x_0>0$ and since $f(x)$ is a quadratic polynomial with positive leading coefficient ,
$f(x)$ attains its minimum at $x_0$. Since $v>\delta-1$,
\[f(x_0)=\frac{4v \delta (v-\delta-1)^3 }{(v-1)^2}>0 \]
and the lemma follows.
\end{proof}

\begin{proposition}\label{Proposition lower bound eigenvalue}
Let $v\geq 3$ and let $F:{\mathbb R}_{\geq0}\rightarrow {\mathbb R}$,
\[
F(x)=\tfrac{x(v-1)-\delta(\delta-1)+\sqrt{(x(v-1)-\delta(\delta-1))^2+4(v-\delta-1)\delta(v-2\delta+x)}}{2(v-\delta-1)}.\]
Further, let ${\mathcal G}$ be a regular graph with degree $\delta$.  For a vertex $u\in V(\G)$ let ${\mathcal G}_u$ denote the neighbourhood graph. 
If ${\mathcal G}_u$ is not connected, let $\bar{\delta}_{{\mathcal C}(\G_u)}$ denote the  average degree of the connected component ${\mathcal C}(\G_u)$ and
\[
\eta_u=\max_{{\mathcal C}(\G_u)}\left\{\max\left\{\bar{\delta}_{{\mathcal C}(\G_u)},F(\bar{\delta}_{{\mathcal C}(\G_u)})\right\}\right\}.
\] 
If $\G_u$ is connected, let $\bar{\delta}_u$ denote the  average degree of $\G_u$ and
\[
\xi_u=F(\bar{\delta}_u).
\] 
Then 
\[
\varrho(\G)=\max_{u\in V(\G)}\left\{\{\eta_u|\G_u \text{ not connected }\}\cup \{\xi_u|\G_u\text{ connected }\}\right\}
\]
is a lower bound for $\nu_2(\G)$ and therefore $\delta-\varrho(\G)$ is an upper bound for the algebraic connectivity of $\G$.
\end{proposition}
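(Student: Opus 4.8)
The plan is to bound $\nu_2(\mathcal{G})$ from below by $\varrho(\mathcal{G})$ and then read off the statement about the algebraic connectivity. Since $\mathcal{G}$ is $\delta$-regular, $\Lambda(\mathcal{G}) = \delta\mathbb{I}_v - A(\mathcal{G})$, so the Laplacian eigenvalues are $\delta - \nu_i(\mathcal{G})$; the all-one vector accounts for $\nu_1(\mathcal{G}) = \delta$ (Laplacian eigenvalue $0$), and the algebraic connectivity equals $\delta - \nu_2(\mathcal{G})$. Hence it suffices to prove $\nu_2(\mathcal{G}) \geq \varrho(\mathcal{G})$, i.e.\ $\nu_2(\mathcal{G}) \geq \xi_u$ whenever $\mathcal{G}_u$ is connected and $\nu_2(\mathcal{G}) \geq \eta_u$ whenever it is not, for every $u$. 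Throughout, the engine is the first part of Theorem \ref{Corollary Haemers}: the eigenvalues of the average-row-sum quotient of any partition of $A(\mathcal{G})$ interlace those of $A(\mathcal{G})$, so $\nu_2(\mathcal{G}) \geq \nu_2(M)$ for each quotient matrix $M$ that we build.

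\emph{Connected case.} Fix $u$ and partition $V(\mathcal{G}) = \{u\} \cup V(\mathcal{G}_u) \cup R$ with $R = V(\mathcal{G}) \setminus (\{u\} \cup V(\mathcal{G}_u))$, so $|V(\mathcal{G}_u)| = \delta$ and $|R| = v - 1 - \delta$. Counting edges with $a = \bar\delta_u$ and using that every vertex has degree $\delta$, the quotient matrix is
\[
M = \begin{pmatrix} 0 & \delta & 0 \\ 1 & a & \delta - 1 - a \\ 0 & \tfrac{\delta(\delta-1-a)}{v-1-\delta} & \delta - \tfrac{\delta(\delta-1-a)}{v-1-\delta} \end{pmatrix}.
\]
Its rows sum to $\delta$, so $\mathbf{1}$ is an eigenvector and, as $M$ is nonnegative, $\delta = \nu_1(M)$ is the largest eigenvalue. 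The remaining two eigenvalues are the roots of $t^2 - (a + d - \delta)t - d$ with $d = \delta(v - 2\delta + a)/(v-1-\delta)$, and a direct simplification identifies the larger root with $F(a)$. Lemma \ref{Lemma Root} guarantees that the quantity under the square root in $F$ is strictly positive, so $F(a)$ is a well-defined real number and the two roots are genuine; thus $\nu_2(M) = F(\bar\delta_u) = \xi_u$, and interlacing gives $\nu_2(\mathcal{G}) \geq \xi_u$.

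\emph{Disconnected case.} Here I treat each component $\mathcal{C}$ of $\mathcal{G}_u$ separately and produce two independent lower bounds, whose maximum over all $\mathcal{C}$ is $\eta_u$. The key structural fact, used repeatedly, is that two vertices lying in different components of $\mathcal{G}_u$ are non-adjacent in $\mathcal{G}$ (as $\mathcal{G}_u$ is the induced subgraph on $N(u)$); in particular every edge leaving $\mathcal{C}$ other than the ones to $u$ must go to $R$. For the bound $F(\bar\delta_\mathcal{C})$ I repeat the argument above with the partition $\{u\} \cup \mathcal{C} \cup \big(V(\mathcal{G}) \setminus (\{u\} \cup \mathcal{C})\big)$: the quotient again has constant row sums $\delta$, its top eigenvalue is $\delta$, and analysing the quadratic for the other two eigenvalues (now with $|\mathcal{C}|$ in place of $\delta$ in the top row) shows that its second eigenvalue is at least $F(\bar\delta_\mathcal{C})$, with equality when $\mathcal{C} = V(\mathcal{G}_u)$. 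For the bound $\bar\delta_\mathcal{C}$ I invoke the Courant--Fischer characterisation: for two distinct components $\mathcal{C}, \mathcal{C}'$ the indicator vectors $\mathbf{1}_\mathcal{C}, \mathbf{1}_{\mathcal{C}'}$ have disjoint supports and, by mutual non-adjacency, satisfy $\mathbf{1}_\mathcal{C}^{\top} A(\mathcal{G}) \mathbf{1}_{\mathcal{C}'} = 0$, while $\mathbf{1}_\mathcal{C}^{\top} A(\mathcal{G}) \mathbf{1}_\mathcal{C} = |\mathcal{C}|\,\bar\delta_\mathcal{C}$; the Rayleigh quotient on their span is therefore a weighted mean of $\bar\delta_\mathcal{C}$ and $\bar\delta_{\mathcal{C}'}$, giving $\nu_2(\mathcal{G}) \geq \min\{\bar\delta_\mathcal{C}, \bar\delta_{\mathcal{C}'}\}$, and pairing $\mathcal{C}$ with a suitable second component yields $\nu_2(\mathcal{G}) \geq \bar\delta_\mathcal{C}$. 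Combining the two bounds and maximising over the components gives $\nu_2(\mathcal{G}) \geq \eta_u$.

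\emph{Main obstacle.} The essential computation --- extracting the characteristic polynomial of the $3\times 3$ quotient, peeling off the Perron root $\delta$, and recognising the larger remaining root as exactly $F(\bar\delta_u)$ --- is where all the algebra lives, and Lemma \ref{Lemma Root} is needed precisely to keep the discriminant positive so that $F$ is real and the second eigenvalue sits correctly below $\delta$. The genuinely delicate part is the disconnected case: one must verify that it is the second (not the first or third) eigenvalue of each quotient that is controlled by $F(\bar\delta_\mathcal{C})$, and that the elementary $\bar\delta_\mathcal{C}$ bound together with the $F$-bound really combine to dominate $\max\{\bar\delta_\mathcal{C}, F(\bar\delta_\mathcal{C})\}$ for every component. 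This is the step demanding the most care with the ordering of eigenvalues and with the choice of the auxiliary partition and partner component.
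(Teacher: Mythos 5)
Your connected case is sound and is exactly the paper's argument: the $3\times 3$ quotient for $\{u\}\cup V(\G_u)\cup R$, Perron root $\delta$, the larger root of the remaining quadratic identified with $F(\bar\delta_u)$, then interlacing. Both halves of your disconnected case, however, have genuine gaps. The first claim --- that the $3\times3$ quotient obtained by merging $\G_u\setminus\mathcal{C}$ into $R$ has second eigenvalue at least $F(\bar\delta_{\mathcal C})$ --- is simply false. Take the Petersen graph ($v=10$, $\delta=3$): $\G_u$ consists of three isolated vertices, so take $\mathcal{C}=\{x\}$ with $\bar\delta_{\mathcal C}=0$. Your quotient is
\[
\begin{pmatrix} 0 & 1 & 2\\ 1 & 0 & 2\\ 1/4 & 1/4 & 5/2\end{pmatrix},
\]
whose eigenvalues are $3$, $1/2$, $-1$, so interlacing yields only $\nu_2\geq 1/2$, while $F(0)=1$ (and indeed $\nu_2=1$ here). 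Collapsing the other neighbours of $u$ into the third part destroys exactly the information that they are non-adjacent to $\mathcal{C}$, and that information is what produces the bound $F(\bar\delta_{\mathcal C})$. The paper avoids this by keeping four parts $\{u\}$, $\mathcal{C}$, $\G_u\setminus\mathcal{C}$, $R$: the characteristic polynomial of its $4\times4$ quotient factors as $(\bar\delta_{\mathcal C}-x)(\delta-x)$ times the same quadratic as in the connected case, so both $\bar\delta_{\mathcal C}$ and $F(\bar\delta_{\mathcal C})$ appear as quotient eigenvalues and one application of interlacing gives both bounds at once.

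Your second argument does not close the remaining gap either. The Rayleigh quotient on the span of $\mathbf{1}_{\mathcal C}$ and $\mathbf{1}_{\mathcal C'}$ is a weighted mean of $\bar\delta_{\mathcal C}$ and $\bar\delta_{\mathcal C'}$, so its minimum over the span can be as small as $\min\{\bar\delta_{\mathcal C},\bar\delta_{\mathcal C'}\}$; hence each pair only certifies $\nu_2\geq\min\{\bar\delta_{\mathcal C},\bar\delta_{\mathcal C'}\}$. There is no ``suitable partner'' for the component of largest average degree --- and that is precisely the component that realizes $\eta_u$. So neither half of your argument controls $\max\{\bar\delta_{\mathcal C},F(\bar\delta_{\mathcal C})\}$ for the maximizing component. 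You should also be aware that this gap is not really fixable: the paper's own $4\times4$ matrix writes the same quantity $\bar\delta_{\mathcal{C}(\G_u)}$ in the diagonal blocks of both $\mathcal{C}$ and $\G_u\setminus\mathcal{C}$, which is the true quotient matrix only when those two parts have equal average degree (as happens for strongly regular graphs, where every component of $\G_u$ is $\lambda$-regular --- the case used later in the paper). When components have different average degrees the stated bound can fail outright. For example, let $\G$ be the $6$-regular graph on $11$ vertices with vertex set $\{u\}\cup\{c_1,\dots,c_4\}\cup\{w_1,w_2\}\cup\{r_1,\dots,r_4\}$, where the $c_i$ form a $K_4$, the $w_j$ form a $K_2$, the $r_i$ form a $4$-cycle, $u$ is joined to all $c_i$ and $w_j$, each $c_i$ is joined to $r_i,r_{i+1}$ (indices mod $4$), and each $w_j$ is joined to all $r_i$. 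Then $\G_u=K_4\sqcup K_2$, so $\eta_u=\max\{3,F(3),1,F(1)\}=3$; but the partition $\{u\},K_4,K_2,C_4$ is equitable with quotient eigenvalues $6$ and the roots of $x^3-7x+2$, the remaining seven eigenvalues are $1,1,-1,-1,-2,-2,-2$, and so $\nu_2(\G)\approx 2.49<3$. So in the disconnected case the quantity $\max_{\mathcal C}\max\{\bar\delta_{\mathcal C},F(\bar\delta_{\mathcal C})\}$ is not a valid lower bound in general, and any proof of the proposition as stated must implicitly assume all components of $\G_u$ have equal average degree.
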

\begin{proof}
\underline{Case 1: $\G_u$ is connected}

We divide the adjacency matrix of ${\mathcal G}$ into $9$ block matrices $M_{ij}$ according to $u$ and the vertices of $\G_u$ and  ${\mathcal G}\setminus \{u,\G_u\}$. 
With $\bar{\delta}_u$ denoting the average degree of  $\G_u$, the matrix of average row sums is
\[
M=\left(
\begin{array}[c]{ccc}
	0&  \delta& 0\\
	1&\bar{\delta}_{\G_u}&\delta-\bar{\delta}_{\G_u}-1\\
	0&\frac{\delta(\delta-(\bar{\delta}_{\G_u}+1))}{v-(\delta+1)}&\delta-\frac{\delta(\delta-(\bar{\delta}_{\G_u}+1))}{v-(\delta+1)}
\end{array}
\right)
\]
and has the characteristic polynomial 
\[
\chi_M(x)=(\delta-x)\left(x^2+\frac{(\delta(\delta-1)-\bar{\delta}_{\G_u}(v-1))x+\delta(2\delta-\bar{\delta}_{\G_u}-v)}{v-\delta-1}\right).
\] 
The eigenvalues of $M$ are $\delta$, $F(\bar{\delta}_u)$ and 
\[
\tfrac{\bar{\delta}_u(v-1)-\delta(\delta-1)-\sqrt{(\bar{\delta}_u(v-1)-\delta(\delta-1))^2+4(v-\delta-1)\delta(v-2\delta+\bar{\delta}_u})}{2(v-\delta-1)}.
\]
Note that by Lemma \ref{Lemma Root} all  eigenvalues of $M$ are real.
By Theorem \ref{Corollary Haemers} the eigenvalues of $M$ interlace the eigenvalues of the adjacency matrix of ${\mathcal G}$, hence $\nu_2({\mathcal G})\geq F(\bar{\delta}_u)$.
This is true for any vertex $u$ with connected neighbourhood graph, in particular if the right hand side of the above inequality  is maximized.

\underline{Case 2: $\G_u$ is not connected}

For any connected component ${\mathcal C}(\G_u)$  we divide the adjacency matrix of ${\mathcal G}$ into $16$ block matrices $M_{ij}$ according to $u$ and the vertices of ${\mathcal C}(\G_u)$, ${\mathcal G}_u\setminus {\mathcal C}(\G_u)$ and ${\mathcal G}\setminus \{u,\G_u\}$. Let $\gamma$ denote the size of ${\mathcal C}(\G_u)$ and $\bar{\delta}_{{\mathcal C}(\G_u)}$ denote the average row sum of $M_{ij}$. Then the matrix of average row sums is
\[
M=\left(
\begin{array}[c]{cccc}
	0& \gamma& \delta-\gamma& 0\\
	1&\bar{\delta}_{{\mathcal C}(\G_u)}&0&\delta-(\bar{\delta}_{{\mathcal C}(\G_u)}+1)\\
	1&0&\bar{\delta}_{{\mathcal C}(\G_u)}&\delta-(\bar{\delta}_{{\mathcal C}(\G_u)}+1)\\
	0&\frac{\gamma(\delta-(\bar{\delta}_{{\mathcal C}(\G_u)}+1))}{v-(\delta+1)}&\frac{(\delta-\gamma)(\delta-(\bar{\delta}_{{\mathcal C}(\G_u)}+1))}{v-(\delta+1)}&\delta-\frac{\delta(\delta-(\bar{\delta}_{{\mathcal C}(\G_u)}+1))}{v-(\delta+1)}
\end{array}
\right)
\]
and has the characteristic polynomial 
\[
\chi_M(x)=(\bar{\delta}_{{\mathcal C}(\G_u)}-x)(\delta-x)\left(x^2+\frac{(\delta(\delta-1)-\bar{\delta}_{{\mathcal C}(\G_u)}(v-1))x+\delta(2\delta-\bar{\delta}_{{\mathcal C}(\G_u)}-v)}{v-\delta-1}\right).
\] 
By Theorem \ref{Corollary Haemers} the eigenvalues of $M$ interlace the eigenvalues of the adjacency matrix of ${\mathcal G}$, hence 
\begin{equation*} \nu_2(G)\geq\max\left\{\bar{\delta}_{{\mathcal C}(\G_u)},F(\bar{\delta}_{{\mathcal C}(\G_u)})\right\}.\end{equation*}
This is true for any connected component ${\mathcal C}(\G_u)$, in particular if the right hand side of the above inequality  is maximized.
\end{proof}

The following lemma gives us a little more information on how the function $F(x)$ in Proposition \ref{Proposition lower bound eigenvalue} behaves and what the values for $\eta_u$ are in the different cases.

\begin{lemma}\label{Lemma values}
Let $v\geq 3$, $1<\delta<v-1$  and  $F:{\mathbb R}_{\geq0}\rightarrow {\mathbb R}$,
\[
F(x)=\tfrac{x(v-1)-\delta(\delta-1)+\sqrt{(x(v-1)-\delta(\delta-1))^2+4(v-\delta-1)\delta(v-2\delta+x)}}{2(v-\delta-1)}.\]
Then 
\begin{itemize}
\item $F$ is increasing on ${\mathbb R}_{\geq0}$
\item $F$ is strictly convex on ${\mathbb R}_{\geq0}$
\item if $\delta=2$ then
\begin{itemize}
 \item if $v=4$,  $F(x)\leq x \text { if }x=0\text{ and }F(x)>x\text{ else;}$
\item $\text{ if }v>4,\ F(x)> x \text { for all }x\geq 0.$
\end{itemize}
\item if $2<\delta<v-1$ then 
\begin{itemize}
\item if $v\leq 2\delta$ then 
\[
F(x)\leq x \text{ for }x\in\left[0,\tfrac{1}{2}(\delta-2+\sqrt{(\delta+2)^2-4v})\right]
\] and $F(x)>x$ else;
\item if $2\delta<v<\tfrac{1}{4}(\delta+2)^2$ then $F(x)\leq x$ for
\[
x\in\left[-\tfrac{1}{2}(\delta-2-\sqrt{(\delta+2)^2-4v}),\tfrac{1}{2}(\delta-2+\sqrt{(\delta+2)^2-4v})\right]
\] and $F(x)>x$ else;
\item if $v=\tfrac{1}{4}(\delta+2)^2$ then
\[
F(x)\leq x\text{ for }x=\tfrac{1}{2}(\delta-2)
\] and $F(x)>x$ else;
\item if $v>\tfrac{1}{4}(\delta+2)^2$ then $F(x)>x$ for all $x\geq 0$.
\end{itemize}
\end{itemize}
\end{lemma}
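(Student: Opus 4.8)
The plan is to stop using the closed form of $F$ and instead treat $F(x)$ as the larger root $y$ of the quadratic factor of $\chi_M$, namely
\[
\Psi(x,y) = (v-\delta-1)y^2 - \big((v-1)x-\delta(\delta-1)\big)y - \delta(v-2\delta+x),
\]
so that $\Psi(x,F(x))=0$. Writing $D(x)$ for the radicand in the definition of $F$, one has $\Psi_y(x,F(x)) = 2(v-\delta-1)F(x)-((v-1)x-\delta(\delta-1)) = \sqrt{D(x)}>0$, the positivity coming from Lemma \ref{Lemma Root}; in particular $F$ is smooth on $\mathbb{R}_{\ge0}$, which legitimises all differentiations below. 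Implicit differentiation gives $F'(x) = -\Psi_x/\Psi_y = ((v-1)F(x)+\delta)/\sqrt{D(x)}$, and a second differentiation yields $F''(x) = N(x)\big((v-1)\sqrt{D(x)}-W(x)\big)/D(x)^{3/2}$, where $N(x)=(v-1)F(x)+\delta$ and $W(x)=(v-1)((v-1)x-\delta(\delta-1))+2(v-\delta-1)\delta$.

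Both of the first two bullets then collapse to a single algebraic identity. First I would verify by direct expansion the cancellation of all $x$-dependence in
\[
(v-1)^2 D(x) - W(x)^2 = 4v\delta(v-\delta-1)^3,
\]
a strictly positive constant under the hypotheses, i.e.\ $(v-1)\sqrt{D(x)}>|W(x)|$. Since $c:=v-\delta-1>0$ and $N(x)=\big((v-1)\sqrt{D(x)}+W(x)\big)/(2c)$, the inequality $(v-1)\sqrt{D}>|W|$ forces $N>0$, hence $F'>0$ and $F$ increasing, and simultaneously $(v-1)\sqrt{D}-W>0$, hence $F''>0$ and $F$ strictly convex.

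For the comparison of $F(x)$ with $x$ I pass to $g(x)=F(x)-x$ and record two further identities, $\Psi(x,x)=-\delta\,H(x)$ and $D(x)-S(x)^2 = 4(v-\delta-1)\delta\,H(x)$, where
\[
H(x)=x^2-(\delta-2)x+(v-2\delta),\qquad S(x)=(v-2\delta-1)x+\delta(\delta-1).
\]
Because $F(x)$ is the larger root of $\Psi(x,\cdot)$ and $2c(F(x)-x)=\sqrt{D(x)}-S(x)$, the second identity gives the exact rule $F(x)\le x\iff S(x)\ge0\text{ and }H(x)\le0$; in particular $F(x)=x$ forces $H(x)=0$, so the only candidate boundary points are the roots $r_\pm=\tfrac12\big((\delta-2)\pm\sqrt{(\delta+2)^2-4v}\big)$ of $H$, which are real exactly when $\Delta:=(\delta+2)^2-4v\ge0$. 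The crux, and the step I expect to be the main obstacle, is the converse: that every real root $r\ge0$ of $H$ actually satisfies $S(r)\ge0$, so that $r$ is a fixed point of $F$ and not of the spurious smaller root. I would prove this through the two symmetric functions of the roots, $S(r_-)+S(r_+)=v(\delta-2)+\delta+2>0$ and, with $t=v-2\delta$, the factorization $S(r_-)S(r_+)=(t+\delta-1)\big(t^2-(\delta+1)t+\delta(\delta^2-2\delta+2)\big)$, whose first factor equals $v-\delta-1>0$ and whose quadratic factor is everywhere positive since its discriminant is $-(\delta-1)^2(4\delta-1)<0$. Hence $S(r_\pm)>0$, and the real roots of $H$ in $[0,\infty)$ are exactly the zeros of $g$.

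Finally I would read off each listed case from the strict convexity of $g$ together with its behaviour at the two ends of $[0,\infty)$: as $x\to\infty$, $F(x)/x\to(v-1)/(v-\delta-1)>1$ so $g\to+\infty$, while $g(0)=F(0)$ has the sign of $v-2\delta$ (as $F(0)>0\iff 4(v-\delta-1)\delta(v-2\delta)>0$). A strictly convex function has at most two zeros, here exactly the real roots of $H$ in $[0,\infty)$; distinguishing $\Delta<0$ (no zero, so $g>0$ throughout), $\Delta=0$ (the double root $\tfrac{\delta-2}{2}$, at which $g$ attains $0$ as its minimum and is positive elsewhere), and $\Delta>0$ with the root positions controlled by $\mathrm{sign}(v-2\delta)$, yields precisely the stated intervals. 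The degenerate family $\delta=2$, where $H(x)=x^2+(v-4)$ and the relevant threshold is $v=4$, is covered verbatim by the same dichotomy.
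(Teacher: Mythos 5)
Your proposal is correct, and although it shares the paper's overall skeleton (monotonicity, strict convexity, then a sign analysis of $F(x)-x$ through the quadratic $H(x)=x^2-(\delta-2)x+(v-2\delta)$), it executes the decisive step by a genuinely different and in fact more rigorous route. The paper differentiates the closed form directly, and for the fixed points it squares $F(x)=x$ to get the candidates $r_\pm=\tfrac12\bigl(\delta-2\pm\sqrt{(\delta+2)^2-4v}\bigr)$, then certifies them by substituting back and simplifying, writing $\sqrt{f(r_\pm)}=\tfrac12\bigl((v-1-2\delta)y+v(\delta-2)+\delta+2\bigr)$ --- silently taking the positive square root. That unchecked sign is exactly your condition $S(r_\pm)\ge 0$, since $(v-1-2\delta)y+v(\delta-2)+\delta+2=2S\bigl(\tfrac12(\delta-2+y)\bigr)$; so the step you singled out as ``the crux'' is precisely where the paper's own verification is incomplete, and your symmetric-function argument ($S(r_-)+S(r_+)=v(\delta-2)+\delta+2>0$ together with the factorization of $S(r_-)S(r_+)$ into $v-\delta-1$ times a quadratic of negative discriminant) closes that gap. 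Your exact rule $F(x)\le x \iff \bigl(S(x)\ge0 \text{ and } H(x)\le0\bigr)$, with $S$ linear and positive at $r_\pm$, then yields all the interval statements at once, whereas the paper needs its convexity-plus-monotonicity argument to pass from the fixed points to the sign of $F(x)-x$. Your differential computations are equivalent to the paper's: the constant $(v-1)^2D(x)-W(x)^2=4v\delta(v-\delta-1)^3$ is the same one the paper reaches when it squares the assumption $F'(\bar x)\le 0$ to get a contradiction, and your product form of $F''$ collapses to the paper's $F''=2v\delta(v-\delta-1)^2/D^{3/2}$ (the paper's exponent $2/3$ is a typo). One further dividend: your analysis gives the left endpoint $\tfrac12\bigl(\delta-2-\sqrt{(\delta+2)^2-4v}\bigr)$ in the case $2\delta<v<\tfrac14(\delta+2)^2$, which agrees with the solutions listed in the paper's proof and confirms that the leading minus sign on that endpoint in the lemma's statement is a typo (with the printed endpoint the interval would contain $0$, yet $F(0)>0$ there).
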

\begin{proof}

Since 
\[
(x(v-1)-\delta(\delta-1))^2+4(v-\delta-1)\delta(v-2\delta+x)>0
\]for $1<\delta<v-1$ and $x\geq 0$,  and with Lemma \ref{Lemma Root} the derivative of $F$ is
\[
2(v-\delta-1)F'(x)=v-1+\tfrac{ (v-1) (x(v-1) -\delta(\delta-1) )+2 \delta (v-\delta-1)}{\sqrt{(x(v-1)-\delta(\delta-1))^2+4(v-\delta-1)\delta(v-2\delta+x)}}.
\]
Suppose there exists $\bar{x}>0$ such that $F'(\bar{x})\leq 0$, then
\begin{eqnarray*}
\sqrt{(\bar{x}(v-1)-\delta(\delta-1))^2+4(v-\delta-1)\delta(v-2\delta+\bar{x})}\leq - [(\bar{x}(v-1) -\delta(\delta-1) )+2 \delta (v-\delta-1)].
\end{eqnarray*}Squaring both sides and elimination of common terms gives
\[
-4\delta v(\delta+1-v)^3\leq 0,
\]a contradiction to $0<\delta<v-1$.
Therefore, $F'(x)> 0$ for all $x \in {\mathbb R}_{>0}$ and the map $F(x)$ is strictly increasing for all $x\geq 0$.

Since second derivative
\[
F''(x)=\frac{2v\delta(\delta+1-v)^2}{\left[(x(v-1)-\delta(\delta-1))^2+4(v-\delta-1)\delta(v-2\delta+x)\right]^{\tfrac{2}{3}}}.
\] is strictly positive for all $x\geq 0$  by Lemma \ref{Lemma Root},  it follows that $F(x)$ is strictly convex on ${\mathbb R}_{\geq0}$.

We want to solve the equation $F(x)=x$. This equation is satisfied if
\begin{equation*}
\sqrt{(x(v-1)-\delta(\delta-1))^2+4(v-\delta-1)\delta(v-2\delta+x)}=x(v-1-2\delta)+\delta(\delta-1).
\end{equation*}Squaring both sides and subtracting the right hand side gives
\begin{equation}\label{equation}
 4\delta(\delta+1-v)\left(x^2-(\delta-2)x-(2\delta-v)\right)=0.
\end{equation}
The equation has the solutions
\[
\tfrac{1}{2}\left(\delta-2\pm\sqrt{(\delta+2)^2-4v}\right).
\]First of all we note that for $v>\tfrac{1}{4}(\delta+2)^2$ and  $\delta\geq 2$ there are no solutions of equation \ref{equation} in ${\mathbb R}$, and therefore $F(x)>x$ for all $x\in{\mathbb R}_{\geq 0}$ in this case.

Suppose $4v\leq(\delta+2)^2$. 
We start with the case $\delta=2$: here, there are no solutions for $v>4$. If $v=4$, then the only solution of equation \ref{equation} is $x=0$ and indeed $F(0)=0$. 
Now let $\delta>2$. 
If $v\leq 2\delta$, then $(\delta+2)^2-4v\geq (\delta-2)^2$ and 
the positive solution of equation \ref{equation} is 
$\tfrac{1}{2}(\delta-2+\sqrt{(\delta+2)^2-4v}).$

If $2\delta<v<1/4(\delta+2)^2$, then $(\delta+2)^2-4v<(\delta-2)^2$ and 
the positive solutions of equation \ref{equation} are 
$
\tfrac{1}{2}(\delta-2-\sqrt{(\delta+2)^2-4v})$ and $\tfrac{1}{2}(\delta-2+\sqrt{(\delta+2)^2-4v}).$
If $v=\tfrac{1}{4}(\delta+2)^2$, then $\sqrt{(\delta+2)^2-4v}=0$ and the only solution to equation \ref{equation} is $x=\tfrac{1}{2}(\delta-2)$. 

It remains to show that $\tfrac{1}{2}(\delta-2\pm\sqrt{(\delta+2)^2-4v})$ are solutions to the equation $F(x)=x$. In fact, we show that $\tfrac{1}{2}(\delta-2+y)$ are solutions to $F(x)=x$ for  $y\in\{\pm\sqrt{(\delta+2)^2-4v}\}$.
As in Lemma \ref{Lemma Root}, let 
\[
f(x)=(x(v-1)-\delta(\delta-1))^2+4(v-\delta-1)\delta(v-2\delta+x).
\]Then
\begin{eqnarray*}
 4f(\tfrac{1}{2}\left(\delta-2+y\right))=
\left((v-1-2\delta)y+v(\delta-2)+\delta+2\right)^2 +4\delta(v-1-\delta)(y^2-(\delta+2)^2-4v),
\end{eqnarray*}
and for $y\in\{\pm\sqrt{(\delta+2)^2-4v}\}$  
\[
4f(\tfrac{1}{2}\left(\delta-2+y\right))=[(v-1-2\delta)y+v(\delta-2)+\delta+2]^2.
\]Therefore  
\begin{eqnarray*}
F\left(\tfrac{1}{2}\left(\delta-2+y\right)\right)&=&\frac{\tfrac{1}{2}(v-1)(\delta-2+y)-\delta(\delta-1)+\sqrt{f(\tfrac{1}{2}\left(\delta-2+y\right))}}{2(v-\delta-1)}\\
&=&\frac{\tfrac{1}{2}(v-1)(\delta-2+y)-\delta(\delta-1)+\tfrac{1}{2}\left((v-1-2\delta)y+v(\delta-2)+\delta+2\right)}{2(v-\delta-1)}\\
&=&\tfrac{1}{2}\left(\delta-2+y\right).
\end{eqnarray*}
Since $F$ is increasing and convex on ${\mathbb R}_{\geq0}$, for  solutions $ x_1\leq x_2$ of $F(x_i)=x_i$ for $i=1,2$, it follows $F(x)\leq x$ for $x\in[x_1,x_2]$ and $F(x)>x$ else. 
For $v\leq 2\delta$ there are solutions $x_1<0$ and $x_2\geq0$ for $F(x)=x$. But since $F$ is convex, it follows that $F(x)\leq x$ for $x\in[0,x_2]$.
\end{proof}


We will conclude this section with an example of a class of graphs that meet the bound in Proposition \ref{Proposition lower bound eigenvalue} showing that this bound can not be improved.

Let $v+1=2\delta$.
By the Handshaking Lemma, which states that a simple regular graph with odd degree must have an even number of vertices, $\delta$ must be even and therefore $v+1\equiv 0\mod 4$.
Let $\G$ be the graph obtained from the complete bipartite graph with parts of size $\frac{v-1}{2}$ and $\frac{v+1}{2}$ by adding $\frac{v+1}{4}$ disjoint edges to pair up the vertices of the part of size $\frac{v+1}{2}$. For example for $v=11$ and $\delta=6$ the graph is the following: 

\bigskip

\begin{center}

\begin{tikzpicture}[rotate=90,every node/.style={fill,circle,inner sep=0pt,minimum size=4pt}]
\draw (0,1) node[]{}  -- (0,2) node[]{};
\draw (0,3) node[]{} -- (0,4) node[]{};
\draw (0,5) node[]{} -- (0,6) node[]{};
\draw (2,2) node[]{};
\draw (2,3) node[]{};
\draw (2,4) node[]{};
\draw (2,5) node[]{};
\draw (2,1) node[]{};
\foreach \x in {1,2,3,4,5,6}
{
\foreach \y in {1,2,3,4,5}{
\draw (0,\x)--(2,\y);
}
}
\end{tikzpicture} 
\end{center}
\bigskip

The second largest eigenvalue of its adjacency matrix is $1$ with mutliplicity $\geq 1$. The corresponding eigenvectors are the vectors of the form 
\[
(\underbrace{0,\cdots,0}_{\frac{v-1}{2}},-1,-1,1,1,0,\cdots,0),\ 
(\underbrace{0,\cdots,0}_{\frac{v-1}{2}},-1,-1,0,0,1,1,0,\cdots,0),\ 
\cdots
\]
For a vertex in the part of size $(v+1)/2$, the neighbourhood graph is always connected. In fact, it is the star graph with $(v-1)/2+1$ vertices. The average degree is $2(v-1)/(v+1)$. 
For a vertex in the part of  size $(v-1)/2$, the neighbourhood graph has $(v+1)/4$ components corresponding to the disjoint edges that pair up the vertices of the part of size $(v+1)/2$. The average degree of all these components is $1$. For $v=2\delta-1$ we have  $\tfrac{1}{2}(\delta-2+\sqrt{(\delta+2)^2-4v})\geq 1$ and therefore  $F(1)\leq 1$.
For $\delta>2$ also $F(2(v-1)/(v+1))\leq 1$ and Proposition \ref{Proposition lower bound eigenvalue} gives us the lower bound 
\[
\varrho(\G)=\max\left\{1,F(1),F(\tfrac{2(v-1)}{v+1})\right\}=1
\]
 on the second largest eigenvalue, which is tight in this case.

%
%

\section{A class of SRGs maximizing the algebraic connectivity}

\begin{lemma}
 Let ${\mathcal G}$ be a strongly regular graph with degree $\delta$ and parameters $\lambda$ and $\mu$. The neighbourhood graph $\G_u$  for any  vertex $u\in V(\G)$ is regular and has degree $\lambda$.
\end{lemma}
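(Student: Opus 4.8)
The plan is to recognise $\G_u$ as the subgraph of $\G$ induced on the neighbours of $u$, and then, for each such neighbour, to count how many of its $\G$-edges remain inside that vertex set. Since $\G$ is $\delta$-regular, $u$ has exactly $\delta$ neighbours, so $V(\G_u)$ consists of $\delta$ vertices; by definition an edge $\{w_1,w_2\}$ of $\G$ lies in $E(\G_u)$ precisely when both of its endpoints are neighbours of $u$.

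First I would fix an arbitrary vertex $w\in V(\G_u)$ and describe its neighbours within $\G_u$. A vertex $w'$ is adjacent to $w$ in $\G_u$ if and only if $w'\in V(\G_u)$ and $\{w,w'\}\in E(\G)$; equivalently, $w'$ is simultaneously adjacent to $u$ and to $w$ in $\G$. Hence the degree of $w$ in $\G_u$ equals the number of common neighbours of $u$ and $w$ in $\G$.

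Next I would invoke the strongly regular structure. Because $w\in V(\G_u)$, the vertex $w$ is adjacent to $u$, so $\{u,w\}$ is an edge of $\G$; by the defining property of a strongly regular graph with parameters $(\delta,\lambda,\mu)$, any two adjacent vertices have exactly $\lambda$ common neighbours. Therefore $w$ has degree exactly $\lambda$ in $\G_u$, and since $w$ was chosen arbitrarily, $\G_u$ is $\lambda$-regular.

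The argument is essentially a bookkeeping identity, so I expect no serious obstacle; the only point requiring care is the claimed correspondence between the neighbours of $w$ inside $\G_u$ and the common neighbours of $u$ and $w$ in $\G$. One must confirm that no common neighbour of $u$ and $w$ coincides with $u$ or with $w$ itself, which is immediate since $\G$ has no loops and hence no vertex is adjacent to itself. Once this is checked, the count is exactly $\lambda$ for every vertex of $\G_u$, which gives the asserted regularity.
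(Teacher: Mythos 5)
Your proof is correct and is essentially the paper's own argument: the paper's one-line proof observes that every vertex of $\G_u$ has exactly $\lambda$ common neighbours with $u$ by strong regularity, which is precisely your identification of the $\G_u$-degree of $w$ with the number of common neighbours of $u$ and $w$. Your version merely spells out the bookkeeping (and the loop-freeness check) that the paper leaves implicit.
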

\begin{proof}
 Any vertex in $\G_u$ has exactly $\lambda$ common neighbours with $u$ since $\G$ is strongly regular.
\end{proof}

\begin{lemma}\label{Neighbourhood Proposition}Let ${\mathcal G}$ be a strongly regular graph  with degree $\delta$ and parameters $\lambda$ and $\mu$.
If \[
\lambda>\tfrac{1}{2}\left(\lambda-\mu+\sqrt{(\lambda-\mu)^2+4(\delta-\mu)}\right),
\] then the neighbourhood graph ${\mathcal G}_u$ of any vertex $u\in V(\G)$ is connected.
\end{lemma}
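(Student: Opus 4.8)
The plan is to reduce the connectivity of $\G_u$ to a statement about the multiplicity of its largest adjacency eigenvalue, and then to control that multiplicity by interlacing $A(\G_u)$ against the spectrum of $\G$ itself.

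First I would record that, by the preceding lemma, the neighbourhood graph $\G_u$ is $\lambda$-regular on the $\delta$ neighbours of $u$. Hence $\lambda$ is the largest eigenvalue of $A(\G_u)$, and since $A(\G_u)=\lambda{\mathbb I}_\delta-\Lambda(\G_u)$, the eigenvalue $\lambda$ of $A(\G_u)$ corresponds to the eigenvalue $0$ of the Laplacian $\Lambda(\G_u)$. By the fact recalled in the Preliminaries, the multiplicity of $0$ as a Laplacian eigenvalue equals the number of connected components of $\G_u$. Thus $\G_u$ is connected if and only if $\lambda$ is a simple eigenvalue of $A(\G_u)$, i.e. the second largest eigenvalue of $A(\G_u)$ is strictly smaller than $\lambda$.

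Next I would observe that $A(\G_u)$ is a principal submatrix of $A(\G)$ of order $\delta$, obtained by restricting to the $\delta$ rows and columns indexed by $V(\G_u)$. By eigenvalue interlacing for principal submatrices (Cauchy interlacing, which is the same interlacing framework of \cite{Haemers} that underlies Theorem \ref{Corollary Haemers}, taking the columns of $S$ to be the standard basis vectors indexed by $V(\G_u)$), the eigenvalues of $A(\G_u)$ interlace those of $A(\G)$; in particular the second largest eigenvalue of $A(\G_u)$ is at most $\nu_2(\G)$, the second largest eigenvalue of $A(\G)$. For the strongly regular graph $\G$ this equals $\tfrac{1}{2}\left(\lambda-\mu+\sqrt{(\lambda-\mu)^2+4(\delta-\mu)}\right)$, since the Perron eigenvalue $\delta$ is simple (as $\G$ is connected) and the other non-trivial eigenvalue is smaller.

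Combining the two steps finishes the argument: the hypothesis states precisely that $\lambda$ exceeds $\tfrac{1}{2}\left(\lambda-\mu+\sqrt{(\lambda-\mu)^2+4(\delta-\mu)}\right)=\nu_2(\G)$, which by interlacing bounds the second largest eigenvalue of $A(\G_u)$ from above. Hence that second eigenvalue lies strictly below $\lambda$, so $\lambda$ is simple and $\G_u$ is connected. I expect the only point requiring care to be the bookkeeping in the interlacing inequality, namely making sure the index shift places the \emph{second} eigenvalue of the $\delta\times\delta$ submatrix beneath $\nu_2(\G)$ rather than beneath the trivial eigenvalue $\delta$, together with the short justification that $\delta$ is a simple eigenvalue of $\G$, which is where connectedness of the strongly regular graph is used.
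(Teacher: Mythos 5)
Your proof is correct, but it takes a genuinely different route from the paper. The paper argues by contradiction with the quotient-matrix form of interlacing: assuming $\G_u$ is disconnected, it partitions $V(\G)$ into four classes ($u$, a component ${\mathcal C}(\G_u)$, the rest of $\G_u$, and the remaining vertices), computes the $4\times 4$ matrix of average row sums, factors its characteristic polynomial as $(\lambda-x)(\delta-x)\left(x^2+(\mu-\lambda)x+\mu-\delta\right)$ using the parameter identity $\delta(\delta-\lambda-1)=(v-\delta-1)\mu$, and concludes via Theorem \ref{Corollary Haemers} that $\nu_2(\G)=\tfrac{1}{2}\left(\lambda-\mu+\sqrt{(\lambda-\mu)^2+4(\delta-\mu)}\right)\geq\lambda$, contradicting the hypothesis. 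You instead work directly with the principal submatrix $A(\G_u)$: since $\G_u$ is $\lambda$-regular, the multiplicity of $\lambda$ as its top eigenvalue counts components, and Cauchy interlacing gives $\nu_2(A(\G_u))\leq\nu_2(\G)<\lambda$, forcing simplicity. Your argument is shorter and avoids both the $4\times 4$ characteristic polynomial computation and the parameter identity; one small caveat is that Theorem \ref{Corollary Haemers} as stated in the paper covers only quotient matrices of block partitions, not principal submatrices, so you must invoke classical Cauchy interlacing (or the general $S^{T}LS$ form in \cite{Haemers}) as a separate, albeit standard, ingredient. What the paper's heavier computation buys is reuse: the same $4\times 4$ quotient matrix and its spectrum are needed again in Corollary \ref{Corollary lambda=}, where tightness of the interlacing and the constant-row-sum conclusion of Theorem \ref{Corollary Haemers} yield the divisibility statement, something your submatrix argument does not provide.
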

\begin{proof}
Suppose there is a vertex $u$ such that ${\mathcal G}_u$ is not connected and let ${\mathcal C}(\G_u)$ be a connected component of $\G_u$ of size $\gamma<\delta$. We divide the adjacency matrix of ${\mathcal G}$ into $16$ block matrices $M_{ij}$ according to $u$ and the vertices of ${\mathcal C}(\G_u)$, ${\mathcal G}_u\setminus {\mathcal C}(\G_u)$ and ${\mathcal G}\setminus \{u,\G_u\}$. Then the matrix of average row sums is
\[M=
\left(
\begin{array}[c]{cccc}
	0& \gamma& \delta-\gamma& 0\\
	1&\lambda&0&\delta-(\lambda+1)\\
	1&0&\lambda&\delta-(\lambda+1)\\
	0&\frac{\gamma(\delta-(\lambda+1))}{v-(\delta+1)}&\frac{(\delta-\gamma)(\delta-(\lambda+1))}{v-(\delta+1)}&\delta-\frac{\delta(\delta-(\lambda+1))}{v-(\delta+1)}
\end{array}
\right).
\]
With equation \ref{lemma parameters of SRG satisfy} we can write the characteristic polynomial of $M$ as
\[
\chi_M(x)=(\lambda-x)(\delta-x)(x^2+(\mu-\lambda)x+\mu-\delta).
\]The eigenvalues of $M$ therefore  are
\[
\delta,\lambda,\tfrac{1}{2}\left(\lambda-\mu\pm\sqrt{(\lambda-\mu)^2+4(\delta-\mu)}\right).
\]
By Theorem \ref{Corollary Haemers}, the eigenvalues of $M$ interlace the eigenvalues of the adjacency matrix of ${\mathcal G}$ and it follows directly that
\[
\nu_2(\G)=\tfrac{1}{2}\left(\lambda-\mu+\sqrt{(\lambda-\mu)^2+4(\delta-\mu)}\right)\geq \lambda.
\] 
\end{proof}
\begin{corollary}\label{Corollary lambda=}Let ${\mathcal G}$ be a strongly regular graph with degree $\delta$ and parameters $\lambda$ and $\mu$.
 If $\lambda=\nu_2(\G)$, 
then the size of every connected component of ${\mathcal G}_u$ is divisible by $\lambda+1$.
\end{corollary}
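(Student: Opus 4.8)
The plan is to exploit the structure forced by the hypothesis $\lambda = \nu_2(\G)$ together with the tightness part of Theorem \ref{Corollary Haemers}. The key observation is that in the proof of Lemma \ref{Neighbourhood Proposition} the interlacing inequality $\nu_2(\G) \geq \lambda$ becomes an \emph{equality} precisely when $\lambda = \nu_2(\G)$. So first I would set up exactly the same $4 \times 4$ quotient matrix $M$ associated to a connected component ${\mathcal C}(\G_u)$ of size $\gamma$, whose eigenvalues are $\delta$, $\lambda$, and $\tfrac{1}{2}(\lambda - \mu \pm \sqrt{(\lambda-\mu)^2 + 4(\delta-\mu)})$. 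Since $\lambda$ is an eigenvalue of $M$ that equals $\nu_2(\G) = \nu_2$ of the full adjacency matrix, I would argue that the eigenvalues of $M$ interlace those of $A(\G)$ tightly at this position; concretely, both $\delta = \nu_1$ and $\lambda = \nu_2$ are shared top eigenvalues, which is the defining configuration for tight interlacing in the sense given in the Preliminaries.

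The second step is to invoke part (2) of Theorem \ref{Corollary Haemers}: once the interlacing is tight, every block $M_{ij}$ of the partitioned adjacency matrix has \emph{constant} row and column sums. The block that matters is the one recording edges \emph{within} the component ${\mathcal C}(\G_u)$. Constant row sums there mean that ${\mathcal C}(\G_u)$, viewed as a graph on its $\gamma$ vertices, is regular; and from the quotient matrix its degree is exactly $\lambda$ (the $(2,2)$-entry). The crucial additional input from strong regularity is that any two adjacent vertices inside $\G_u$ share $\lambda$ common neighbours in $\G$, and within a single component these common neighbours, apart from $u$ itself, must also lie in the same component. This should pin down ${\mathcal C}(\G_u)$ as a $\lambda$-regular graph in which adjacent vertices have a controlled number of common neighbours, forcing it to be a disjoint-edge-free clique-like structure on $\lambda+1$ vertices.

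The cleanest route to the divisibility conclusion is then a counting or combinatorial-design argument: a connected $\lambda$-regular graph on $\gamma$ vertices in which the local neighbourhood structure is inherited from a strongly regular graph with $\nu_2 = \lambda$ must in fact be a disjoint union of cliques $K_{\lambda+1}$ at the component level, or more directly each component is itself a complete graph $K_{\lambda+1}$, whence $\gamma = \lambda+1$ and the divisibility $(\lambda+1) \mid \gamma$ is immediate. I would verify that $\lambda$-regularity plus the common-neighbour count $\lambda$ between adjacent vertices forces each component to be a clique on $\lambda+1$ vertices, using that in $K_{\lambda+1}$ every vertex has degree $\lambda$ and every pair of adjacent vertices has exactly $\lambda - 1$ common neighbours inside the clique, matched against the global constraint.

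The main obstacle I anticipate is justifying that the interlacing is genuinely \emph{tight} rather than merely that $\lambda$ appears as an eigenvalue of $M$. Having $\lambda = \nu_2(\G)$ coincide with the second eigenvalue of $M$ gives equality at position $i=2$, but the formal definition of tight interlacing requires matching a whole initial segment of top eigenvalues (or a complementary tail), so I would need to check that $\delta$ and $\lambda$ really occupy the top two slots of $M$'s spectrum and that the smaller pair $\tfrac{1}{2}(\lambda-\mu \pm \sqrt{(\lambda-\mu)^2+4(\delta-\mu)})$ sits below, which should follow since the negative branch is at most $0$ and the positive branch equals $\nu_2(\G) = \lambda$. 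Once tightness is secured, the structural rigidity from part (2) of the Higman--Sims theorem does the real work, and the divisibility is a short combinatorial consequence.
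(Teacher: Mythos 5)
Your setup—the $4\times4$ quotient matrix, the argument that its spectrum $\{\delta,\lambda,\lambda,\tfrac12(\lambda-\mu-\sqrt{(\lambda-\mu)^2+4(\delta-\mu)})\}$ forces tight interlacing, and the appeal to part (2) of Theorem \ref{Corollary Haemers}—matches the paper. But the final step is wrong in two ways. First, the block you extract information from is the wrong one: constant row sums in the within-component block only say that ${\mathcal C}(\G_u)$ is $\lambda$-regular, which is automatic for \emph{any} strongly regular graph (the paper's first lemma in that section) and uses neither tightness nor the hypothesis $\lambda=\nu_2(\G)$. Second, the combinatorial claim you then lean on—that each component must be a clique $K_{\lambda+1}$, so $\gamma=\lambda+1$—is false. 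The Shrikhande graph is an SRG$(16,6,2,2)$ with $\nu_2=2=\lambda$, yet the neighbourhood of every vertex is a $6$-cycle, not a union of triangles; and among the SRG$(40,12,2,4)$'s (also satisfying $\lambda=\nu_2=2$) there are graphs whose local graph is a disjoint union of cycles such as $C_3+C_9$, i.e.\ disconnected with a non-clique component. In all these cases only the divisibility $(\lambda+1)\mid\gamma$ survives, which is exactly why the corollary asserts divisibility and nothing stronger; your local counting argument (adjacent vertices in a component sharing $\lambda-1$ neighbours besides $u$) cannot close, because those common neighbours need not lie inside $\G_u$ at all.

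The step you are missing is arithmetic, not structural. From $\lambda=\nu_2(\G)=\tfrac12(\lambda-\mu+\sqrt{(\lambda-\mu)^2+4(\delta-\mu)})$ one first derives $\delta=\mu(\lambda+1)$ (square the relation); this identity is essential and never appears in your proposal. Then, with tightness granted, look at the block $M_{42}$ joining $\G\setminus(\{u\}\cup\G_u)$ to ${\mathcal C}(\G_u)$: its constant row sum is
\[
\frac{\gamma(\delta-\lambda-1)}{v-\delta-1}=\gamma\,\frac{\mu}{\delta}=\frac{\gamma}{\lambda+1},
\]
where the first equality uses the SRG parameter relation $\delta(\delta-\lambda-1)=(v-\delta-1)\mu$ and the second uses $\delta=\mu(\lambda+1)$. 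Since row sums of a $0$--$1$ matrix are integers, $(\lambda+1)\mid\gamma$. You also need to treat the case where $\G_u$ is connected separately (your $4\times4$ partition degenerates there): then $\gamma=\delta=\mu(\lambda+1)$ and the divisibility is immediate from the same identity—this is precisely the case realised by the Shrikhande graph.
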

\begin{proof}
If  $\lambda=\nu_{2}(\G)= \tfrac{1}{2}\left(\lambda-\mu+\sqrt{(\lambda-\mu)^2+4(\delta-\mu)}\right)$,  then  $\lambda=\tfrac{\delta-\mu}{\mu}$, that is $\delta=\mu(\lambda+1)$. Let $u$ be a vertex of $\G$ and $\G_u$ the neighbourhood graph. Now, if $\G_u$ is connected, then the size of $\G_u$ is $\delta$ which is divisible by $\lambda+1$.
Suppose $\G_u$ is not connected. Then  we can divide the adjacency matrix of ${\mathcal G}$ into $16$ block matrices $M_{ij}$ according to $u$ and the vertices of a connected component ${\mathcal C}(\G_u)$ of ${\mathcal G}_u$ of size $\gamma<\delta$, ${\mathcal G}_u\setminus {\mathcal C}(\G_u)$ and ${\mathcal G}\setminus \{u,\G_u\}$. The $4\times 4$ matrix of the average row sums of these block matrices is exactly the matrix $M$ in the proof of Proposition \ref{Neighbourhood Proposition}.
Again, the eigenvalues of $M$ interlace the eigenvalues of the adjacency matrix of ${\mathcal G}$.
Since $\lambda$ is an eigenvalue of  ${\mathcal G}$,  the interlacing is tight. 
It follows with Theorem \ref{Corollary Haemers} that all  matrices $M_{ij}$ have constant row sums, in particular
\[
\frac{\gamma(\delta-\lambda-1)}{v-\delta-1}=\gamma\frac{\mu}{\delta}=\frac{\gamma}{\lambda+1}\in {\mathbb N},
\]
hence $\gamma$ is divisible by $\lambda+1$.
\end{proof}

\begin{theorem}\label{Proposition Optimal SRG}Let $v\geq 3$.
 Let $\G$ be a strongly regular graph with degree $\delta$ and parameters $\lambda$ and $\mu$ such that
\[
\lambda\geq \tfrac{1}{2}(\lambda-\mu+\sqrt{(\lambda-\mu)^2+4(\delta-\mu)}).
\]
  If $\lambda$ minimizes the average degree of the connected components of all neighbourhood graphs in a class of $\delta$-regular graphs on $v$ vertices, then $\G$ maximizes the algebraic connectivity among all graphs in that class.
\end{theorem}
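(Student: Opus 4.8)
The plan is to convert the statement about algebraic connectivity into one about the second largest adjacency eigenvalue. For a connected $\delta$-regular graph the Laplacian is $\delta{\mathbb I}-A$, so its eigenvalues are $\delta-\nu_i(\G)$ and the algebraic connectivity equals $\delta-\nu_2(\G)$. Hence maximizing the algebraic connectivity over the class is the same as minimizing $\nu_2$, and it suffices to prove $\nu_2(\G')\geq\nu_2(\G)$ for every $\delta$-regular graph $\G'$ on $v$ vertices in the class. The strategy is then to squeeze this between the lower bound $\varrho$ of Proposition \ref{Proposition lower bound eigenvalue} applied to $\G'$ and the exact value of that bound on $\G$.

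First I would show that the bound is tight on the strongly regular graph, i.e. $\varrho(\G)=\nu_2(\G)$. Since $\G$ is strongly regular, every neighbourhood graph $\G_u$ is $\lambda$-regular, and therefore every connected component of every $\G_u$ is itself $\lambda$-regular, with average degree exactly $\lambda$. The key computational step is to evaluate $F$ at $\lambda$: simplifying the radicand with the parameter identity \ref{lemma parameters of SRG satisfy} — equivalently, factoring the characteristic polynomial of the Case~1 quotient matrix as $(\delta-x)(x^2+(\mu-\lambda)x+(\mu-\delta))$, exactly as was done in the proof of Lemma \ref{Neighbourhood Proposition} — gives $F(\lambda)=\tfrac12(\lambda-\mu+\sqrt{(\lambda-\mu)^2+4(\delta-\mu)})$. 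The right-hand side is precisely the second largest adjacency eigenvalue $\nu_2(\G)$ of the strongly regular graph, so $F(\lambda)=\nu_2(\G)$. Identifying the abstract value $F(\lambda)$ with the genuine eigenvalue is the heart of the argument, and it is also where the hypothesis enters, since that hypothesis reads exactly $\lambda\geq\nu_2(\G)=F(\lambda)$.

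Next I would settle tightness by a short case analysis on $\lambda\geq\nu_2(\G)$. If $\lambda>\nu_2(\G)$, Lemma \ref{Neighbourhood Proposition} forces every $\G_u$ to be connected, so $\varrho(\G)=\max_u\xi_u=F(\lambda)=\nu_2(\G)$. If $\lambda=\nu_2(\G)$, then any disconnected $\G_u$ contributes $\max\{\lambda,F(\lambda)\}=\lambda=\nu_2(\G)$ per component and any connected $\G_u$ contributes $F(\lambda)=\nu_2(\G)$, so again $\varrho(\G)=\nu_2(\G)$. The role of the hypothesis $\lambda\geq\nu_2(\G)$ is exactly to prevent a disconnected neighbourhood from contributing the strictly larger value $\lambda>F(\lambda)$, which would otherwise violate tightness of the bound.

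Finally I would run the monotonicity comparison for an arbitrary $\G'$ in the class. By the minimality hypothesis, $\lambda$ is the smallest average degree attained by a connected component of a neighbourhood graph across the class, so for every vertex $u$ of $\G'$ and every component $\mathcal C$ of $\G'_u$ we have $\bar\delta_{\mathcal C}\geq\lambda$ (and $\bar\delta_u\geq\lambda$ when $\G'_u$ is connected). Since $F$ is increasing on ${\mathbb R}_{\geq0}$ by Lemma \ref{Lemma values}, every term in the definition of $\varrho(\G')$ is at least $F(\lambda)$: indeed $\xi_u=F(\bar\delta_u)\geq F(\lambda)$ and $\eta_u\geq F(\bar\delta_{\mathcal C})\geq F(\lambda)$. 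Taking the maximum over $u$ gives $\varrho(\G')\geq F(\lambda)=\nu_2(\G)$, and Proposition \ref{Proposition lower bound eigenvalue} yields $\nu_2(\G')\geq\varrho(\G')\geq\nu_2(\G)$, hence $\delta-\nu_2(\G')\leq\delta-\nu_2(\G)$, which is the assertion. The main obstacle is the second paragraph — the identity $F(\lambda)=\nu_2(\G)$ together with the case analysis that pins $\varrho(\G)$ to $\nu_2(\G)$; once tightness is secured, the comparison in the last paragraph is immediate from the monotonicity of $F$.
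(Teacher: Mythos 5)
Your proposal is correct and follows essentially the same route as the paper's proof: evaluate $F(\lambda)=\tfrac12(\lambda-\mu+\sqrt{(\lambda-\mu)^2+4(\delta-\mu)})=\nu_2(\G)$ via the parameter identity, then use the minimality of $\lambda$ and the monotonicity of $F$ from Lemma \ref{Lemma values} to force $\nu_2(\G')\geq\varrho(\G')\geq F(\lambda)=\nu_2(\G)$ through Proposition \ref{Proposition lower bound eigenvalue}. If anything, your treatment is slightly more careful than the paper's: you explicitly handle possibly disconnected neighbourhood graphs of $\G$ in the borderline case $\lambda=\nu_2(\G)$, and you bound $\eta_u$ below by its $F$-component so the comparison step does not even need the hypothesis $\lambda\geq F(\lambda)$, whereas the paper invokes $x\geq\lambda\geq F(\lambda)$ there.
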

\begin{proof}Let $u\in V(\G)$. Any connected component of $\G_u$ has average degree $\lambda$, since any neighbour of $u$ has exactly $\lambda$ common neighbours. Let $F:{\mathbb R}_{\geq0}\rightarrow {\mathbb R}$,
\[
F(x)=\tfrac{x(v-1)-\delta(\delta-1)+\sqrt{(x(v-1)-\delta(\delta-1))^2+4(v-\delta-1)\delta(v-2\delta+x)}}{2(v-\delta-1)}.\]
  With Proposition \ref{Proposition lower bound eigenvalue} it follows that for all $u\in V(\G)$
\begin{eqnarray*}
\xi_u&=&F(\lambda)\\
&=&\tfrac{1}{2}(\lambda-\mu+\sqrt{(\lambda-\mu)^2+4(\delta-\mu)})
\end{eqnarray*}
is a lower bound for the second largest eigenvalue of $\G$, and in fact we have equality.

 From Lemma \ref{Lemma values} follows that $F(x)$ is increasing for all $x\geq 0$  and if  $\lambda$  is the minimal average degree of the neighbourhood graph of any $\delta$-regular graph  in the class, then $F(x)$ attains its minimum at $x=\lambda$   among all regular graphs  in the class.  Note, that $F(\lambda)=\nu_2(\G)\leq \lambda$ by assumption.

Let $\G'$ be any $\delta$-regular graph and $\varrho(\G')$ the lower bound for the second largest eigenvalue $\nu_2(\G')$ of $\G'$ from Proposition \ref{Proposition lower bound eigenvalue}. Then  $\varrho(\G')$ is either an average degree $x\geq \lambda$ or  $\varrho(\G')=F(x)$. Since $x\geq \lambda\geq F(\lambda)$ and since $F(x)\geq F(\lambda)$ for all $x\geq \lambda$, the  eigenvalue $\nu_2(\G')$ is at least as large as $F(\lambda)=\nu_2(\G)$, the second largest eigenvalue of $\G$. Therefore, $\G$ maximizes $\delta-\nu_2(\G)$ among all $\delta$-regular graphs in the class.
\end{proof}

\begin{proposition}
Suppose there exists a SRG  with degree $\delta$ and parameters $\lambda$ and $\nu$. If $v\leq 2\delta-\lambda$ and  $\lambda\geq \tfrac{1}{2}(\lambda-\mu+\sqrt{(\lambda-\mu)^2+4(\delta-\mu)})$, then the SRG maximizes the algebraic connectivity among all $\delta$-regular graphs.
\end{proposition}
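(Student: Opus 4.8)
The plan is to verify the single hypothesis required to invoke Theorem~\ref{Proposition Optimal SRG}, namely that $\lambda$ is the minimal average degree of a connected component of a neighbourhood graph over the whole class of $\delta$-regular graphs on $v$ vertices. The inequality $\lambda\geq\tfrac12(\lambda-\mu+\sqrt{(\lambda-\mu)^2+4(\delta-\mu)})$ is assumed outright, so once this minimality is established the conclusion follows at once from that theorem applied to the class of all $\delta$-regular graphs on $v$ vertices.

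First I would record that for the strongly regular graph $\G$ itself every connected component of every neighbourhood graph has average degree exactly $\lambda$: the neighbourhood graph $\G_u$ is $\lambda$-regular, and a connected component of a regular graph inherits the same degree, so each component is $\lambda$-regular and in particular has average degree $\lambda$. Thus the value $\lambda$ is attained within the class.

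The heart of the argument is a matching lower bound, valid for an arbitrary $\delta$-regular graph $\G'$ on $v$ vertices. Fix $u\in V(\G')$, let $C$ be a connected component of $\G'_u$, and let $w\in V(C)$. Counting the $\delta$ neighbours of $w$ in $\G'$: exactly one of them is $u$; those lying in $\G'_u$ must all lie in $C$, because $C$ is a connected component of $\G'_u$ and hence no edge of $\G'_u$ joins $C$ to $\G'_u\setminus C$; and the remaining neighbours lie among the $v-1-\delta$ vertices outside $\{u\}\cup V(\G'_u)$. Writing $d_C(w)$ for the number of neighbours of $w$ inside $C$, this yields $\delta-1-d_C(w)\leq v-1-\delta$, that is $d_C(w)\geq 2\delta-v$. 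The hypothesis $v\leq 2\delta-\lambda$ upgrades this to $d_C(w)\geq\lambda$ for every $w\in V(C)$, so the average degree of $C$ is at least $\lambda$.

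Combining the two steps, $\lambda$ is a lower bound for the average degree of any connected component of any neighbourhood graph across all $\delta$-regular graphs on $v$ vertices, and it is attained by $\G$; hence $\lambda$ minimizes this quantity over the class. Theorem~\ref{Proposition Optimal SRG} then gives that $\G$ maximizes the algebraic connectivity among all $\delta$-regular graphs on $v$ vertices. The only delicate point is the counting step, and specifically the observation that a connected component of $\G'_u$ is closed under the adjacency of $\G'_u$, so that every $\G'_u$-neighbour of $w$ already lies in $C$; granting this, the inequalities $d_C(w)\geq 2\delta-v\geq\lambda$ are immediate, and the rest is a direct appeal to the earlier theorem.
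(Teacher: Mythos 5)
Your proof is correct and follows essentially the same route as the paper: both verify the minimality hypothesis of Theorem~\ref{Proposition Optimal SRG} by the counting argument that any vertex $w$ of a neighbourhood graph $\G'_u$ in a $\delta$-regular graph on $v\leq 2\delta-\lambda$ vertices must have at least $2\delta-v\geq\lambda$ neighbours inside $\G'_u$, since it has too few potential neighbours outside. Your write-up is in fact slightly more careful than the paper's, as you make explicit both that components are closed under adjacency (so component degrees equal $\G'_u$-degrees) and that the value $\lambda$ is actually attained by the strongly regular graph itself.
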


\begin{proof}
 Let $\G$ be any $\delta$-regular graph on $v$ vertices. For any vertex $u$ the neighbourhood graph $\G_u$ has $\delta$ vertices. Suppose $w$ is a vertex in $\G_u$ and its degree in the neighbourhood graph is $\delta_u(w)<\lambda$. Then $w$ has \[\delta-\delta_u(w)>\delta-\lambda\geq \delta-2\delta+v=v-\delta=|\G\setminus\G_u|\] neighbours outside $\G_u$, a contradiction. Therefore the conditions of Theorem \ref{Proposition Optimal SRG} are satisfied and the statement follows.
 \end{proof}
\begin{example}{Complete regular bipartite graphs}
Let $\alpha,m\in{\mathbb N}$ such that $\alpha\geq 2$ and $\alpha m\geq 4$.
The the complete regular multipartite  graph $K_{m,m,\ldots,m}$ with $\alpha$ parts of size $m$  is a strongly regular graph of degree $(\alpha-1)m$ and  parameters
\[
\lambda=(\alpha-2)m,\ \mu=(\alpha-1)m.
\]
Since 
\[
\lambda>0=\tfrac{1}{2}(\lambda-\mu+\sqrt{(\lambda-\mu)^2+4(\delta-\mu)})\text{ and }v=\alpha m=2\delta-\lambda,
\] the  conditions of the above proposition are satisfied. It follows the known fact (\cite{Takeuchi}) that  $K_{m,m,\ldots,m}$  maximizes the  the algebraic connectivity among all $(\alpha-1)m$-regular graphs on $\alpha m$ vertices. 
\end{example}

\addcontentsline{toc}{section}{Bibliography}

\bibliographystyle{agsm}
\bibliography{lit}


\end{document}